\documentclass[11pt]{article}

\usepackage{amsthm}
\usepackage{amsmath}
\usepackage{amssymb}
\usepackage{amsmath, amsthm, bbm}

\usepackage{framed, fullpage}

\usepackage[backref,colorlinks,citecolor=blue,bookmarks=true]{hyperref}

\newtheorem{theo}{Theorem}

\newtheorem{conj}{Conjecture}
\newtheorem{lemma}{Lemma}[section]
\newtheorem{definition}[lemma]{Definition}

\newtheorem{claim}[lemma]{Claim}

\newcommand{\F}{\mathcal{F}}
\renewcommand{\S}{S}

\newcommand{\ith}{i^{th}}
\newcommand{\jth}{j^{th}}

\newcommand{\dth}{d^{th}}

\newcommand{\size}[1]{\left\lvert #1 \right\rvert}
\newcommand{\norm}[1]{\left\lVert #1 \right\rVert}

\newcommand{\WLsym}[1]{\overrightarrow{W}_{#1}}
\newcommand{\SUsym}[1]{S_{#1}}
\newcommand{\WUsym}[1]{W_{#1}}

\newcommand{\SUsymbol}{\SUsym{n}}
\newcommand{\WLsymbol}{\WLsym{n}}
\newcommand{\WUsymbol}{\WUsym{n}}

\newcommand{\WL}{\WLsymbol(p,q)}

\newcommand{\WU}{\WUsymbol(p,q)}

\newcommand{\WLH}{\WLsymbol(p_1,\ldots,p_d)}

\newcommand{\WUH}{\WUsymbol(p_1,\ldots,p_d)}

\newcommand{\floor}[1]{\left\lfloor{#1}\right\rfloor}

\date{}

\title{Exact Bounds for Some Hypergraph Saturation Problems\footnotetext{2010 {\em Mathematics Subject Classification:} 05D10, 05A17, 05C65}}
\author{
Guy Moshkovitz\thanks{School of Mathematics, Tel-Aviv University, Tel-Aviv, Israel 69978.  Email: {\tt guymosko@tau.ac.il}. Supported in part by ISF grant 224/11.}
\and Asaf Shapira\thanks{School of Mathematics, Tel-Aviv University, Tel-Aviv, Israel 69978, and Schools of Mathematics and Computer Science, Georgia Institute of Technology, Atlanta, GA 30332. Email: {\tt asafico@tau.ac.il}. Supported in part by NSF Grant DMS-0901355, ISF Grant 224/11 and a Marie-Curie CIG Grant 303320.}
}
\begin{document}

\maketitle
\begin{abstract}
Let $W_n(p,q)$ denote the minimum number of edges in an $n \times n$ bipartite graph $G$ on vertex sets $X,Y$ that satisfies the following condition; one can add the edges between $X$ and $Y$ that do not belong to $G$ one after the other so that whenever a new edge is added, a new copy of $K_{p,q}$ is created.
The problem of bounding $W_n(p,q)$, and its natural hypergraph generalization, was introduced by Balogh, Bollob\'as, Morris and Riordan. Their main result, specialized to graphs, used algebraic methods to determine $W_n(1,q)$.

Our main results in this paper give exact bounds for $W_n(p,q)$, its hypergraph analogue, as well as for a new variant of Bollob\'as's Two Families Theorem. In particular, we completely determine $W_n(p,q)$, showing that if $1 \leq p \leq q \leq n$ then
$$
W_n(p,q)=n^2 - (n-p+1)^2 + (q-p)^2\;.
$$
Our proof applies a reduction to a multi-partite version of the Two Families Theorem obtained by Alon. While the reduction is combinatorial, the main idea behind it is algebraic.
\end{abstract}

\section{Introduction}\label{sec:Intro}

One of the most well-known results in Extremal Combinatorics is Bollob\'as's Two Families Theorem~\cite{Bollobas65}, which states\footnote{This formulation, which is equivalent to the result of Bollob\'as, was actually conjectured by Ehrenfeucht and Mycielski, and confirmed by Katona~\cite{Katona74}, Jaeger and Payan~\cite{JaegerPa71}, and Tarj\'an~\cite{Tarjan75}.} that if $A_1,\ldots,A_h$ and $B_1,\ldots,B_h$ are two families of sets satisfying $A_i\cap B_j=\emptyset$ if and only if $i=j$, and if $\size{A_i}\le a$ and $\size{B_i}\le b$, then $h\le \binom{a+b}{b}$.
While this theorem has many applications,
Bollob\'as's motivation for proving it was an extremal graph/hypergraph  {\em saturation} problem, which we further discuss in Subsection \ref{ssec:Hypergraphs}.

Our most general result in this paper is a variant of the Two Families Theorem for multi-partite sets.
Our main motivation for proving this result was a problem considered by
Balogh, Bollob\'as, Morris and Riordan~\cite{BaloghBoMoRi11} which is a variant of the saturation problem originally studied by Bollob\'as~\cite{Bollobas65}.
We elaborate on this aspect of the paper in Subsection \ref{ssec:main}. As we show later in the paper, our new Two Families Theorem can be used to fully resolve the problem considered in~\cite{BaloghBoMoRi11}.

To state our new version of the Two Families Theorem we need the following definition, where here and throughout the paper, we use $[n]$ to denote the set $\{1,\ldots,n\}$.

\begin{definition}\label{def:Q}
Let $a_1,\ldots,a_d$ and $b_1,\ldots,b_d$ be nonnegative integers with $a:=\max_i a_i$, and take $U_1,\ldots,U_d$ to be disjoint sets, where $|U_i|=a+b_i$, and (with a slight abuse of notation) we think of each set $U_i$ as $[a+b_i]$.
We define $Q(a_1,\ldots,a_d,b_1,\ldots,b_d)$ as the number of
sets $S\subseteq U_1\cup\cdots\cup U_d$ for which there is a permutation $\pi:[d]\to[d]$ so that $S\cap U_i$ is a subset of $[a_{\pi(i)}+b_i]$ of size $b_i$.
\end{definition}

Our new Two Families Theorem is as follows.

\begin{theo}\label{thm:Combin}
Let $X_1,\ldots,X_d$ be $d$ disjoint sets and let $a_1,\ldots,a_d,b_1,\ldots,b_d$ be nonnegative integers.
Suppose $A_1,\ldots,A_h$ and $B_1,\ldots,B_h$ are two families of subsets of $X_1\cup\cdots\cup X_d$ that satisfy:
\begin{enumerate}
\item $A_i\cap B_i=\emptyset$ for every $1 \leq i \leq h$.
\item $A_i\cap B_j\neq\emptyset$ for every $1\leq  i<j \leq h$.
\item For every $1\le i\leq h$ and $1\le j\le d$ we have $\size{B_i\cap X_j}\leq b_j$.
\item For every $1\le i\leq h$ there is a permutation $\pi:[d]\to[d]$ so that for every $1\le j\le d$ we have $\size{A_i\cap X_j}\leq a_{\pi(j)}$.
\end{enumerate}
Then $$h\le Q(a_1,\ldots,a_d,b_1,\ldots,b_d)\;.$$
Moreover, this bound is best possible for any choice of $a_1,\ldots,a_d$ and $b_1,\ldots,b_d$.
\end{theo}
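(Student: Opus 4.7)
I would prove the bound by adapting the exterior-algebra framework of Lov\'asz, as extended to the multi-partite setting by Alon: produce a target vector space of dimension exactly $Q$ into which each pair $(A_i,B_i)$ is encoded as a vector, then show linear independence of these $h$ vectors. Concretely, for each $j\in[d]$ take a real vector space $V_j$ of dimension $a+b_j$ with basis $e_{j,1},\ldots,e_{j,a+b_j}$, and for each permutation $\pi$ of $[d]$ set $V_j^{(\pi)} := \mathrm{span}(e_{j,1},\ldots,e_{j,a_{\pi(j)}+b_j})$. Define $W_\pi := \bigotimes_{j=1}^d \Lambda^{b_j} V_j^{(\pi)}$ and $W := \sum_\pi W_\pi$, viewed inside $\bigotimes_j \Lambda^{b_j} V_j$. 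A direct count of the monomial basis vectors of $W$ (indexed by tuples $(I_j)_j$ with $|I_j|=b_j$ and $I_j \subseteq [a_{\pi(j)}+b_j]$ for some $\pi$) gives $\dim W = Q(a_1,\ldots,a_d,b_1,\ldots,b_d)$ straight from the definition of $Q$.

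Next, for each $i$ I would build $v_i\in W_{\pi_i}\subseteq W$ as follows. For each $y\in X_j$, set $m(y):=\min\{a_{\pi_i(j)}+b_j: 1\le i\le h,\,y\in B_i\}$ (and $m(y):=a+b_j$ if $y$ lies in no $B_i$), and pick $u_y$ generically in $\mathrm{span}(e_{j,1},\ldots,e_{j,m(y)})$. Let $v_{ij}$ be the wedge of $\{u_y:y\in B_i\cap X_j\}$ together with $b_j - |B_i\cap X_j|$ generic dummy vectors drawn from $V_j^{(\pi_i)}$, and set $v_i := \bigotimes_j v_{ij}$; by construction $v_i\in W_{\pi_i}$. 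To prove linear independence, I would follow Lov\'asz's dual-wedge strategy: set $\alpha_i := \bigotimes_j \alpha_{ij}$ where $\alpha_{ij}$ is the wedge of $\{u_x:x\in A_i\cap X_j\}$ and $a - |A_i\cap X_j|$ generic dummies in $V_j$, and use the pairing $\alpha_{ij}\wedge v_{i'j}\in \Lambda^{a+b_j}V_j\cong \mathbb{R}$. The off-diagonal vanishing $\alpha_i(v_{i'})=0$ for $i<i'$ is immediate from condition 2: any $x\in A_i\cap B_{i'}\cap X_k$ contributes the same $u_x$ to both $\alpha_{ik}$ and $v_{i'k}$, killing that wedge.

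The crux, and the main obstacle I anticipate, is showing that the diagonal pairing $\alpha_i(v_i)$ is generically non-zero. The subspace constraints on the $u_y$'s can force several of them into a common low-dimensional subspace, and the naive wedge $\alpha_{ij}\wedge v_{ij}$ can vanish identically (this does happen in small degenerate configurations, e.g.\ when some $a_k=0$). The saving structural fact is that $|A_i\cap X_j|\le a_{\pi_i(j)}$ implies $a_{\pi_i(j)}\ge 1$ whenever $A_i\cap X_j\neq\emptyset$, and $m(y)\ge b_j$ for every $y$ in some $B_i$; these inequalities ensure that the relevant parameter space retains enough freedom for a generic choice to make $\alpha_i(v_i)$ non-trivial, possibly after replacing the naive $\alpha_i$ by a refined functional such as a projection onto a complement of $\sum_{\pi\neq\pi_i}W_\pi$ inside $W_{\pi_i}$. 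Once this technical step is carried out, the resulting triangular non-degenerate system yields linear independence of $v_1,\ldots,v_h$ in $W$, hence $h\leq \dim W = Q$. For the tightness claim, identify each $X_j$ with $U_j=[a+b_j]$ and, for each $S$ counted by $Q$ with a fixed witnessing permutation $\pi_S$, set $B_S:=S$ and $A_S\cap X_j := [a_{\pi_S(j)}+b_j]\setminus(S\cap U_j)$; ordering the pairs appropriately (say lexicographically on $(\pi_S,S)$) makes the crossing $A_S\cap B_{S'}\neq\emptyset$ hold whenever $S<S'$, producing an extremal family of size $Q$.
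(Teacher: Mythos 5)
Your approach is genuinely different from the paper's: you attempt a direct exterior-algebra proof, whereas the paper proves the upper bound by a purely combinatorial reduction to Alon's theorem (it enlarges every $a_i$ to $a=\max_i a_i$, pads each $A_i$ with a set $C_{\pi_i}$ of new elements, and then adds $h'=\prod_j\binom{a+b_j}{b_j}-Q$ dummy pairs $(\overline{B'_k},B'_k)$, where the $B'_k$ are exactly those $B\subseteq U$ with $\size{B\cap U_j}=b_j$ meeting every $C_\pi$; Alon's theorem applied to the enlarged families then gives $h+h'\le\prod_j\binom{a+b_j}{b_j}$, i.e.\ $h\le Q$). Your identification of the target space $W=\sum_\pi W_\pi$ and the count $\dim W=Q$ are correct, and the off-diagonal vanishing via condition 2 is fine. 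But the proof has a genuine gap exactly where you flag it: the diagonal nondegeneracy $\alpha_i(v_i)\neq 0$ can fail identically, not just for unlucky choices. Since an element $z\in X_j$ may lie in several $B_k$'s with different witnessing permutations, $u_z$ is confined to $\mathrm{span}(e_{j,1},\ldots,e_{j,m(z)})$ with $m(z)$ as small as $\min_k a_k+b_j$; if every $z\in(A_i\cup B_i)\cap X_j$ is so confined, then $\alpha_{ij}\wedge v_{ij}$ wedges $\size{A_i\cap X_j}+\size{B_i\cap X_j}$ vectors (possibly as many as $a_{\pi_i(j)}+b_j$) lying in a subspace of dimension $\min_k a_k + b_j$, which is too small whenever the $a_k$ are not all equal. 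The structural facts you cite ($a_{\pi_i(j)}\ge 1$ when $A_i\cap X_j\neq\emptyset$, and $m(y)\ge b_j$) do not rule this out, and the proposed fix (``a refined functional'') is not specified. This is precisely the difficulty the authors mention having faced: their original direct algebraic proof needed Blokhuis's trick of adjoining extra polynomials/functionals, and it is exactly the combinatorial avatar of that trick (the dummy pairs $(\overline{B'_k},B'_k)$) that makes the paper's reduction work. Without supplying that missing ingredient your argument does not close.

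On tightness, your construction of the extremal pairs $(A_S,B_S)$ matches the paper's, but the ordering is the nontrivial point and ``lexicographically on $(\pi_S,S)$'' is not justified. The paper orders the $B_S$ by decreasing total label weight $w(B)=\sum_j\sum_{x\in B\cap U_j}x$ and shows that $A_i\cap B_{i'}=\emptyset$ with $i\neq i'$ forces $w(B_{i'})>w(B_i)$ (each element of $B_{i'}\cap U_j$ is either in $B_i\cap U_j$ or has a larger label than everything in $[a_{\pi(j)}+b_j]$), which is what makes the skew condition hold. You should verify your ordering actually has this property or switch to the weight ordering.
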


We refer the reader to the end of Section~\ref{sec:thm_combin} for an explicit formula for $Q(a_1,\ldots,a_d,b_1,\ldots,b_d)$.
Variants and special cases of Theorem~\ref{thm:Combin} were proved by several authors. Most notably, Alon~\cite{alon}, using techniques from exterior algebra, proved a Two Families Theorem that differs from Theorem~\ref{thm:Combin} in that the permutation $\pi$ is restricted to be the identity permutation (i.e., the fourth condition is replaced by the requirement that for every $1\le i\leq h$, $1\le j\le d$ we have $\size{A_i\cap X_j}\leq a_j$).
Alon's theorem then states that $h\le \prod_{i=1}^d \binom{a_i+b_i}{b_i}$.
Alon's theorem was preceded by a proof of the special case ${d=1}$, which is often called the \emph{skew} Two Families\footnote{Strictly speaking, the conditions in the skew version assume an ordering of the sets, so the two families are really two sequences; however, we keep the term ``Two Families'' for the sake of consistency.} theorem. This theorem was proved by Lov\'asz~\cite{Lovasz}, Frankl~\cite{Fr} and Kalai~\cite{Kalai} in some of the classical applications of the linear algebra method in Combinatorics. Interestingly, finding a combinatorial proof for it is still open.

\subsection{Background on saturation problems}\label{ssec:Hypergraphs}

While our main motivation in this paper is a certain saturation problem in bipartite graphs (and more generally, $d$-uniform $d$-partite hypergraphs),
we begin by mentioning some classical results on saturation problems in non-bipartite graphs.
A graph $G$ is \emph{strongly saturated} with respect to a graph $H$ (or strongly $H$-saturated) if $G$ does not contain a copy of $H$, yet adding any new edge to $G$ creates a copy of $H$. The problem of strong saturation asks for the \emph{minimum} number of edges in an $n$-vertex graph that is strongly $H$-saturated, for different graphs $H$ of interest (notice that the ``dual'' problem, of finding the \emph{maximum} number of edges in an $n$-vertex $H$-saturated graph, is of course the classical Tur\'an problem).
Let $\SUsymbol(p)$ be the minimum number of edges in an $n$-vertex graph that is strongly $K_p$-saturated, where $2\le p\le n$.
The problem of determining $\SUsymbol(p)$ was considered already in the 1940's by Zykov~\cite{Zykov49}, and later by Erd{\H{o}}s, Hajnal and Moon~\cite{ErdosHaMo64} who showed that $\SUsymbol(p)=\binom{n}{2}-\binom{n-p+2}{2}$.
The upper bound on $\SUsymbol(p)$ is easy, as removing the edges of a $K_{n-p+2}$ from $K_n$ clearly gives a strongly $K_p$-saturated graph.
Bollob\'as's Two Families Theorem, mentioned at the beginning of this paper, gives a tight lower bound for $S_n(p)$ and for its natural hypergraph generalization (see the end of Section~\ref{sec:thm_combin} for a similar reduction).

The following notion of saturation was originally introduced by Bollob\'as~\cite{Bela68}.
A graph $G$ is \emph{weakly saturated} with respect to a graph $H$ (or weakly $H$-saturated) if all the non-edges of $G$ can be added one at a time, in some order, so that each new edge creates a new copy of $H$.
We refer to the corresponding ordering of the non-edges of $G$ as a \emph{saturation process} of $G$ with respect to $H$.
For example, it is not hard to see that the weakly $K_3$-saturated graphs with the minimum number of edges are precisely the trees; notice that already for $K_3$ the extremal examples are not unique, suggesting that the general problem might be quite challenging.

Let $\WUsymbol(p)$ be the minimum number of edges in an $n$-vertex graph that is weakly $K_p$-saturated, where $2\le p\le n$.
Notice that for any $H$, a strongly $H$-saturated graph is in particular weakly $H$-saturated, so $\WUsymbol(p)\le \SUsymbol(p)$.
It follows from the skew version of the Two Families Theorem that in fact $\WUsymbol(p)=\SUsymbol(p)$. That being said, the extremal graphs are not the same; there are weakly $K_p$-saturated graphs with $\binom{n}{2}-\binom{n-p+2}{2}$ edges which are not strongly $K_p$-saturated.

\subsection{Weak saturation in multi-partite hypergraphs}\label{ssec:main}

In this paper we focus on saturation problems in the setting of bipartite graphs, and more generally, $d$-uniform $d$-partite hypergraphs. This variant of the problem was first introduced in 1964 by Erd{\H{o}}s, Hajnal and Moon~\cite{ErdosHaMo64}.
Unlike the definition of saturation in the previous subsection, here (and henceforth) the only edges that are considered are those containing one vertex from each vertex class.
Let $H$ be a $d$-uniform $d$-partite hypergraph with vertex classes $V_1,\ldots,V_d$.
We say that $H$ is weakly $K^d_{p_1,\ldots,p_d}$-saturated\footnote{$K^d_{p_1,\ldots,p_d}$ denotes the complete $d$-uniform $d$-partite hypergraph with vertex classes of sizes $p_1,\ldots,p_d$.} if all edges---containing one vertex from each $V_i$---that do not belong to $H$ can be added to $H$ one after the other so that whenever a new edge is added, a new copy of $K^d_{p_1,\ldots,p_d}$ is created.\footnote{Saturation in the setting of $d$-partite hypergraphs is referred to in some papers as $d$-saturation, or bi-saturation if $d=2$.
Since we henceforth only consider saturation in this setting, we prefer to keep using the term ``saturation''.}

Our main motivation in this paper is the question of determining the following function.
For integers ${1\le p_1,\ldots,p_d \le n}$, let $\WUH$ be the smallest number of edges in a $d$-uniform $d$-partite hypergraph, with $n$ vertices in each vertex class, that is weakly $K^d_{p_1,\ldots,p_d}$-saturated.
While this notion of weak saturation in $d$-uniform $d$-partite hypergraphs was introduced only recently by Balogh et al.~\cite{BaloghBoMoRi11}, a similar notion of weak saturation, in which the copies of $K^d_{p_1,\ldots,p_d}$ are required to have $p_i$ vertices in the $\ith$ vertex class, was considered long before; we refer to this notion as \emph{directed} weak saturation.
For integers ${1\le p_1,\ldots,p_d \le n}$ denote $\WLH$ the directed analogue of $\WUH$.
Alon~\cite{alon}
determined $\WLH$ exactly, showing that $\WLH= n^d-\prod_{i=1}^d (n-p_i+1)$.
Note that, by definition, $\WUsymbol(p,\ldots,p)=\WLsymbol(p,\ldots,p)$, and so we can deduce from Alon's result a partial answer to the question considered in this paper, namely,
\begin{equation}\label{eq:symmetricH}
\WUsymbol(p,\ldots,p) = n^d-(n-p+1)^d \;.
\end{equation}
A partial answer for a different setting of parameters was given by Balogh et al.~\cite{BaloghBoMoRi11}. Their main result, proved using linear algebraic techniques, determined $\WUH$ when the $p_i$ only take the two values $1$ and $q$, for some positive integer $q$.

In this paper we determine $\WUH$ for all values of $p_1,\ldots,p_d$ and $n$.
To state our result we need the following definition.
\begin{definition}\label{def:q_n}
For integers ${1\le p_1\le\cdots\le p_d\le n}$,
let $q_n(p_1,\ldots,p_d)$ be the number of $d$-tuples $x\in[n]^d$ such that $x_{(i)} \ge p_i$ for every $1\le i\le d$, where $x_{(i)}$ is the $\ith$ smallest element in the sorted $d$-tuple of $x$ (i.e., which includes repetitions).\footnote{For example, if $x=(5,2,5,1)$ then the sorted $4$-tuple of $x$ is $(1,2,5,5)$.}
\end{definition}

Our main theorem is as follows.

\begin{theo}\label{thm:Main}
For all integers $1 \le p_1\le \cdots \le p_d\le n$ we have
$$\WUH=n^d - q_n(p_1,\ldots,p_d)\;.$$
\end{theo}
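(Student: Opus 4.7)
My plan is to prove both directions separately: the upper bound via an explicit construction, the lower bound via the new Two Families Theorem~\ref{thm:Combin}. For the construction, take $X_1=\cdots=X_d=[n]$ and declare $x\in[n]^d$ to be an edge of $H$ iff $x_{(k)}<p_k$ for some $k$; the number of non-edges of $H$ is then precisely $q_n(p_1,\ldots,p_d)$. I would add the non-edges in lexicographic order of their sorted tuples $(x_{(1)},\ldots,x_{(d)})$, and for each non-edge $x$ I would witness a new copy of $K^d_{p_1,\ldots,p_d}$ by choosing a permutation $\sigma$ with $x_{\sigma(i)}=x_{(i)}$ and taking the $\sigma(i)$-th part of the copy to be $\{1,\ldots,p_i-1\}\cup\{x_{(i)}\}$, which has size $p_i$ since $x_{(i)}\ge p_i$.

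The key claim is that every other edge $y$ of this copy either lies in $E(H)$ or was added before $x$. Writing $z_i=y_{\sigma(i)}$, by construction $z_i\in\{1,\ldots,p_i-1\}\cup\{x_{(i)}\}$, so $z_i\le x_{(i)}$ for every $i$. Since $i\le k$ implies $z_i\le x_{(i)}\le x_{(k)}$, at least $k$ of the $z_i$'s are $\le x_{(k)}$, yielding $y_{(k)}=z_{(k)}\le x_{(k)}$. If equality holds for all $k$, then the multisets $\{z_i\}$ and $\{x_{(i)}\}$ coincide, and a short induction (at step $i$, the remaining multiset has minimum $x_{(i)}$, while $z_i\le x_{(i)}$) forces $z_i=x_{(i)}$ and hence $y=x$. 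Otherwise, the sorted tuple of $y$ is strictly lex-smaller than that of $x$, and $y$ is either in $E(H)$ (if some $y_{(k)}<p_k$) or a previously-added non-edge.

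For the lower bound, let $e_1,\ldots,e_m$ be the non-edges of any weakly saturated $H$ enumerated in saturation order, and for each $i$ let $C_i$ be the new copy created at step $i$, with part $V_{i,j}$ of size $p_{\pi_i(j)}$ for some permutation $\pi_i$, and $e_i=(u_{i,1},\ldots,u_{i,d})$ with $u_{i,j}\in V_{i,j}$. Define
\[
A_i=\bigcup_{j=1}^d\bigl(X_j\setminus V_{i,j}\bigr),\qquad B_i=\{u_{i,1},\ldots,u_{i,d}\},
\]
and take parameters $a_j=n-p_j$ and $b_j=1$. Conditions~(1), (3), (4) of Theorem~\ref{thm:Combin} are immediate (with the witnessing permutation in~(4) being $\pi_i$ itself); condition~(2) amounts to showing that for $i<j$ the edge $e_j$ cannot be an edge of $C_i$, which holds because at step $i$ every edge of $C_i$ other than $e_i$ is already present while $e_j$ is not, so some coordinate $u_{j,\ell}$ must lie outside $V_{i,\ell}$, i.e.\ in $A_i$. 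Theorem~\ref{thm:Combin} thus yields $m\le Q(n-p_1,\ldots,n-p_d,1,\ldots,1)$.

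It remains to identify $Q(n-p_1,\ldots,n-p_d,1,\ldots,1)$ with $q_n(p_1,\ldots,p_d)$. Unwinding Definition~\ref{def:Q} with $b_j=1$, each admissible $S$ corresponds to a tuple $(s_1,\ldots,s_d)\in[n-p_1+1]^d$ for which some permutation $\pi$ satisfies $s_i\le n-p_{\pi(i)}+1$ for every $i$. Substituting $t_i=n+1-s_i$ recasts this as the existence of $\pi$ with $t_i\ge p_{\pi(i)}$ for every $i$, which by Hall's theorem is equivalent to $t_{(k)}\ge p_k$ for every $k$; counting such tuples in $[n]^d$ gives exactly $q_n(p_1,\ldots,p_d)$. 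I expect the main obstacle to lie in the construction: carefully choosing the initial edge set together with the sorted-lex ordering so that the dominance $z_i\le x_{(i)}$ forces every ``escape'' from $y=x$ to go through either an earlier step or a starting edge. Given Theorem~\ref{thm:Combin}, the lower bound is comparatively clean, reducing to a standard matching identification of $Q$ with $q_n$.
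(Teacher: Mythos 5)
Your proposal is correct. The upper bound is essentially the paper's own construction: your edge set is exactly the hypergraph $G_0$ whose non-edges are the tuples counted by $q_n(p_1,\ldots,p_d)$, and your saturation process (lex order on sorted tuples) is just a different linear extension of the same underlying dominance order the paper uses (it orders non-edges by weight $x_1+\cdots+x_d$); in both cases the key point is that every other edge $y$ of the witnessing copy satisfies $y_{(k)}\le x_{(k)}$ for all $k$ with strict inequality somewhere. Your lower bound, however, diverges from the paper's main argument. The paper (Lemma~\ref{lemma:main_lower}) glues the saturated hypergraph $H$ to $\overline{G_0}$ on a fresh set of $n$ vertices per class, shows via Claim~\ref{claim:complement} that the resulting $2n$-vertex-per-class hypergraph is weakly $K^d_{n+1,\ldots,n+1}$-saturated, and then invokes the symmetric case~(\ref{eq:symmetricH}) coming from Alon's directed result. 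You instead apply Theorem~\ref{thm:Combin} directly to the pairs $(A_i,e_i)$ with $A_i$ the complement of the vertex set of the new copy $C_i$; this is precisely the alternative proof the paper sketches in Subsection~\ref{subsec:alternative}, and you correctly supply the one step the paper leaves unproved there, namely the identity $Q(n-p_1,\ldots,n-p_d,1,\ldots,1)=q_n(p_1,\ldots,p_d)$ (your Hall-type equivalence between the existence of a permutation $\pi$ with $t_i\ge p_{\pi(i)}$ and the sorted condition $t_{(k)}\ge p_k$ is right). The trade-off is that your route leans on the full strength of Theorem~\ref{thm:Combin}, whereas the paper's gluing argument needs only the already-known symmetric case and is self-contained within Section~\ref{sec:thm_main}; both are valid and non-circular, since Theorem~\ref{thm:Combin} is proved independently by reduction to Alon's theorem.
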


It is of course interesting to find explicit formulas for $q_n(p_1,\ldots,p_d)$, and thus for $\WUH$; we do so in Section~\ref{sec:thm_main}.
By combining Theorem~\ref{thm:Main} and the explicit formulas, we obtain the interesting corollary that if $p_d=o(n)$ then $\WUH$ is asymptotically determined only by $p_1$, namely,
\begin{equation}\label{eq:W_asymptotic}
\WUH = (d(p_1-1)+o(1))n^{d-1} \;.
\end{equation}
This should be compared with the fact that the directed analogue $\WLH$ is asymptotically determined by all $p_1,\ldots,p_d$; specifically, if $p_1,\ldots,p_d$ are all of order $o(n)$
then $\WLH=(p_1+\ldots+p_d-d+o(1))n^{d-1}$.

\subsection{Proof overview}

Let us finally remark on the proofs of the theorems stated above.
Interestingly, our proof of Theorem~\ref{thm:Combin} proceeds by an indirect argument that reduces Theorem~\ref{thm:Combin} to Alon's Two Families Theorem (see Theorem~\ref{thm:Noga} for the exact statement). Actually, our proof proceeds by reducing an instance with
$a_1,\ldots,a_d$ to one where all $a_i$ are replaced by $\max_ia_i$. This might seem counter-intuitive since enlarging the $a_i$ {\em increases} the upper bound
in Theorem \ref{thm:Noga}. The catch is that when we will come to apply the bound for Theorem~\ref{thm:Noga}, the fact that we have increased the $a_i$ is going to allow us to add some ``dummy'' pairs of sets $A'_i,B'_i$ so that the new instance will still satisfy the requirements of Theorem \ref{thm:Noga}.
Somehow the trade-off between increasing $a_1,\ldots,a_d$ and adding the dummy sets results in a tight bound.
We prove Theorem~\ref{thm:Main} using a reduction along similar lines.

The alert reader has probably noticed that the ``trick'' we use here, namely, adding extra sets to the families before applying the upper bound, is somewhat reminiscent of the trick used by Blokhuis to improve the bound on the size of $2$-distance sets in Euclidean space~\cite{Blokhuis84}. And indeed, our original proof was a direct one, applying the algebraic proof of Alon's theorem by Blokhuis~\cite{Blokhuis90}, via resultants of polynomial, together with the trick from~\cite{Blokhuis84} of adding extra polynomials in order to improve the upper bound. As it happens, we later realized that it is in fact possible to reduce the problem to Alon's theorem---no algebraic machinery necessary! (at least not explicitly)

\paragraph*{Organization:}
The rest of the paper is organized as follows.
We prove Theorem~\ref{thm:Main} in Section~\ref{sec:thm_main}.
Section~\ref{sec:thm_main} also contains some explicit formulas for $\WUH$.
In Section~\ref{sec:thm_combin} we prove our new Two Families Theorem, Theorem~\ref{thm:Combin}. We also give an explicit formula for $Q(a_1,\ldots,a_d,b_1,\ldots,b_d)$, as well as briefly
explain how can one obtain an alternative proof of Theorem~\ref{thm:Main} using Theorem~\ref{thm:Combin}.
Section \ref{sec:conclude} contains some concluding remarks and open problems.

\section{Undirected Weak Saturation of Hypergraphs}\label{sec:thm_main}

We begin by proving the upper bound in Theorem~\ref{thm:Main}.
As a warm-up, let us briefly describe the construction proving the upper bound  for the special case of graphs.
Specifically, we show that $\WU \le n^2 - (n-p+1)^2 + (q-p)^2$.
Consider the $n\times n$ bipartite graph with vertex classes $\{x_1,\ldots,x_n\}$ and $\{y_1,\ldots,y_n\}$ which is the union of three complete graphs: two $K_{p-1,n}$'s, one with edges $\{x_iy_j : i<p\}$ and the other with edges  $\{x_iy_j : j<p\}$, and a $K_{q-p,q-p}$ with edges $\{x_iy_j : {p \le i,j < q}\}$.
To see that this graph is weakly $K_{p,q}$-saturated, simply add the edges $x_iy_j$ with $i,j\ge q$ only after the rest of the missing edges are added.
The reader may easily verify that by adding the missing edges in this order, a new copy of $K_{p,q}$ is indeed created upon each addition.

We now generalize the construction above to the case of hypergraphs.
Let $G_0=G_0(p_1,\ldots,p_d)$ be the $d$-uniform $d$-partite hypergraph whose non-edges are enumerated by $q_n(p_1,\ldots,p_d)$ (recall Definition~\ref{def:q_n}). More formally, let the vertex classes $V_1,\ldots,V_d$ of $G_0$ each contain $n$ vertices, and let us label the vertices in each set by $1,2,\ldots,n$ (abusing notation slightly).
Let us henceforth identify edges with $d$-tuples in $[n]^d$.\footnote{I.e., the $d$-tuple $(x_1,\ldots,x_d)\in[n]^d$ is identified with the edge containing from each $V_i$ the vertex labeled $x_i$.}
Then an edge $(x_1,\ldots,x_d)\in V_1 \times \cdots \times V_d$ {\bf does not} belong to $G_0$ if and only if for every $1\le i\le d$, the $\ith$ smallest element (i.e., when $x_1,\ldots,x_d$ are sorted with repetitions) is at least $p_i$.
We now show that $G_0$ is weakly $K^d_{p_1,\ldots,p_d}$-saturated, proving the upper bound in Theorem~\ref{thm:Main}.
Henceforth we use $\norm{H}$ for the number of edges in a hypergraph $H$.

\begin{lemma}\label{lemma:main_upper}
$\WUH \leq \norm{G_0}$.
\end{lemma}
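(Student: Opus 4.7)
The plan is to exhibit an explicit saturation process for $G_0$, generalizing the bipartite construction in the warm-up. Since $G_0$ has exactly $n^d - q_n(p_1,\ldots,p_d)$ edges by its very definition, it suffices to verify that $G_0$ is weakly $K^d_{p_1,\ldots,p_d}$-saturated.

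I would add the non-edges in lexicographically non-decreasing order of their sorted versions: a non-edge $(x_1,\ldots,x_d)$ with sorted tuple $(y_1,\ldots,y_d)$ (so $y_1\le \cdots\le y_d$) comes before another non-edge with sorted tuple $(y_1',\ldots,y_d')$ whenever $(y_1,\ldots,y_d)$ is lex-smaller than $(y_1',\ldots,y_d')$, with ties broken arbitrarily. This mirrors the ``add $(i,j)$ with $i,j\ge q$ last'' rule used in the bipartite warm-up.

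When a non-edge $e=(x_1,\ldots,x_d)$ is added, I would exhibit the required new $K^d_{p_1,\ldots,p_d}$-copy as follows. Let $\sigma$ be a permutation with $x_{\sigma(i)}=y_i$ and set
$$R_j = \{1,2,\ldots,p_{\sigma^{-1}(j)}-1\} \cup \{x_j\} \subseteq V_j.$$
Since $x_j=y_{\sigma^{-1}(j)}\ge p_{\sigma^{-1}(j)}$ (as $e$ is a non-edge of $G_0$), the set $R_j$ has size exactly $p_{\sigma^{-1}(j)}$, and the box $R_1\times\cdots\times R_d$ is a copy of $K^d_{p_1,\ldots,p_d}$ (the multiset of part sizes agrees because $\sigma^{-1}$ is a permutation) containing $e$.

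The main obstacle is verifying that every other tuple $(r_1,\ldots,r_d)\in R_1\times\cdots\times R_d\setminus\{e\}$ is already present in the hypergraph at the moment $e$ is added. The key observation is that $r_j\le x_j$ for every $j$, with strict inequality precisely at those $j$ where $r_j\le p_{\sigma^{-1}(j)}-1$, and by hypothesis at least one such $j$ exists. A standard rearrangement argument (if $a_j\le b_{\phi(j)}$ for a bijection $\phi$, then the ascending sorts satisfy $a_{(i)}\le b_{(i)}$) applied with $\phi=\sigma^{-1}$ yields $r_{(i)}\le y_i$ for every $i$. Moreover the multisets $\{r_j\}$ and $\{y_i\}$ differ (their sums do, since at least one strict inequality occurs), which forces the sorted version of $r$ to be lex strictly smaller than $y$. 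This gives the desired dichotomy: either some $r_{(i)}<p_i$, in which case $(r_1,\ldots,r_d)$ is already an edge of $G_0$, or $(r_1,\ldots,r_d)$ is a non-edge whose sorted tuple strictly precedes $y$ in our ordering and was therefore added earlier. In either case, the box $R_1\times\cdots\times R_d$ completes to a full $K^d_{p_1,\ldots,p_d}$-copy exactly upon the addition of $e$, supplying the required new copy and establishing the upper bound.
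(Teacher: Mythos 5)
Your proof is correct and essentially matches the paper's: the copy of $K^d_{p_1,\ldots,p_d}$ you build (the box $R_1\times\cdots\times R_d$ with $R_j=\{1,\ldots,p_{\sigma^{-1}(j)}-1\}\cup\{x_j\}$) is exactly the one used there. The only difference is the ordering of the non-edges --- the paper orders them by the weight $x_1+\cdots+x_d$ rather than by lexicographic order on sorted tuples, which lets it bypass the rearrangement step entirely, since every other tuple in the box trivially has strictly smaller coordinate sum.
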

\begin{proof}
Call $x_1+\cdots+x_d$ the {\em weight} of the edge $e=(x_1,\ldots,x_d)$, and denote $G_w$ the $d$-uniform $d$-partite hypergraph obtained from $G_0$ by adding every edge of weight at most $w$.
We next prove that adding any new edge of weight $w$ to $G_{w-1}$ creates a new copy of $K^d_{p_1,\ldots,p_d}$.
From this it clearly follows by induction on $w$ that $G_0$ is weakly $K^d_{p_1,\ldots,p_d}$-saturated, as required.

Let $e=(x_1,\ldots,x_d)$ be an edge of weight $w$ and suppose $e$ is not in $G_{w-1}$.
We next construct for each $1\le i\le d$ a set $S_i$ of vertices from the $\ith$ vertex class. Fix $1\le i\le d$, and suppose that $x_i$ is the $\jth$ smallest among $x_1,\ldots,x_d$ (i.e., when ordered with repetitions).
We let $S_i$ be the set of vertices, from the $\ith$ vertex class, labeled by $1,2,\ldots,p_j-1$ (recall $p_1\le\cdots\le p_d$).
Since $e$ is not in $G_w$ and hence not in $G_0$, it follows from the definition of $G_0$ that $x_i \ge p_j$.
Therefore, $S_i\cup\{x_i\}$ has $p_j$ (distinct) elements.
Note that every edge spanned by $\bigcup_{i=1}^d (S_i\cup\{x_i\})$, except for $e$, is of weight smaller than that of $e$, and so is contained in $G_{w-1}$.
This means that adding $e$ to $G_0$ creates a new copy of $K^d_{p_1,\ldots,p_d}$ spanned by the vertices $\bigcup_{i=1}^d (S_i\cup\{x_i\})$,
thus completing the proof.
\end{proof}

We next turn to the proof of the lower bound in Theorem~\ref{thm:Main}.
Let us start with a quick argument showing that in the graph case we have $\WU \ge n^2 - (n-p+1)^2 + (q-p)^2$.
Given a weakly $K_{p,q}$-saturated bipartite graph, add $q-p$ new vertices to each vertex class, connecting each new vertex to all the original vertices in the other class. A moment's thought reveals that this new graph is weakly $K_{q,q}$-saturated. This means that the number of edges in the new graph is at least $W_{n+q-p}(q,q)$, and applying~(\ref{eq:symmetricH}) we get the desired lower bound.

We will now show how one can use the hypergraph $G_0$ we constructed earlier to prove that every weakly $K^d_{p_1,\ldots,p_d}$-saturated hypergraph must have as many edges as $G_0$.
First, we will need to use the property of $G_0$ that its complement,\footnote{By complement we mean relative to $K_{n,\ldots,n}$, that is, the hypergraph that contains an edge $(x_1,\ldots,x_d)\in V_1 \times \cdots \times V_d$ if and only if $G_0$ does not.} denoted $\overline{G_0}$, contains every possible ``orientation'' of $K^d_{n-p_1+1,\ldots,n-p_d+1}$.

\begin{claim}\label{claim:complement}
For every permutation $\pi:[d]\to[d]$, the hypergraph $\overline{G_0}$ contains a copy of the hypergraph $K^d_{n-p_1+1,\ldots,n-p_d+1}$ having $n-p_{\pi(i)}+1$ vertices in the $\ith$ vertex class.
\end{claim}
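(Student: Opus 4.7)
The plan is to exhibit an explicit copy, obtained simply by taking an ``upward'' interval of labels in each vertex class. For each $i \in [d]$, let $T_i$ be the set of vertices in the $\ith$ vertex class labeled $p_{\pi(i)}, p_{\pi(i)}+1, \ldots, n$; this block has size exactly $n - p_{\pi(i)}+1$, so the $d$-partite hypergraph spanned by $T_1 \times \cdots \times T_d$ is a copy of $K^d_{n-p_1+1,\ldots,n-p_d+1}$ with the prescribed number of vertices in each class. It remains to verify that every edge $(x_1,\ldots,x_d)$ with $x_i \in T_i$ for all $i$ lies in $\overline{G_0}$, equivalently, satisfies $x_{(j)} \ge p_j$ for every $j$.

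To handle the permutation cleanly, I would set $y_j := x_{\pi^{-1}(j)}$. Then $(y_1,\ldots,y_d)$ is a rearrangement of $(x_1,\ldots,x_d)$ and so shares its sorted tuple, giving $x_{(j)} = y_{(j)}$ for every $j$. Moreover, the hypothesis $x_i \ge p_{\pi(i)}$ translates under $i = \pi^{-1}(j)$ to $y_j \ge p_j$ for every $j$, so it suffices to show $y_{(j)} \ge p_j$ for every $j$.

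At this point the claim reduces to the following elementary fact: if $y_j \ge p_j$ for all $j$ and $p_1 \le \cdots \le p_d$, then $y_{(j)} \ge p_j$ for all $j$ as well. I would prove this by contradiction: if $y_{(j)} < p_j$ then at least $j$ indices $i$ satisfy $y_i \le y_{(j)} < p_j$, and combined with $y_i \ge p_i$ this forces $p_i < p_j$, hence $i < j$ by monotonicity of the $p_k$; but only $j-1$ such indices are available, a contradiction. The only delicate point in the whole argument is keeping $\pi$ and $\pi^{-1}$ straight; once that bookkeeping is in place, both the choice of the copy and the final pigeonhole step are immediate.
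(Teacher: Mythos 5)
Your proof is correct and follows essentially the same route as the paper: the same explicit copy (the top $n-p_{\pi(i)}+1$ labels in each class), followed by the same key fact that pointwise domination of the sorted tuple $(p_1,\ldots,p_d)$ (after undoing the permutation) implies domination of the order statistics. The paper proves that fact by directly counting elements $\ge y_{(i)}$ rather than by your pigeonhole contradiction, but the two arguments are interchangeable.
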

\begin{proof}
We start with a simple observation, claiming that if two tuples of real numbers $x=(x_1,\ldots,x_d)$ and $y=(y_1,\ldots,y_d)$ satisfy $x_i\ge y_i$ for every $1\le i\le d$, then they satisfy $x_{(i)}\ge y_{(i)}$ for every $1\le i\le d$ as well (where, as usual, $x_{(i)}$ is the $\ith$ smallest element in the sorted tuple of $x$, and similarly for $y$).
To see this, let $\sigma:[d]\to[d]$ be a permutation sorting $y$, that is, $y_{\sigma(1)}\le\cdots\le y_{\sigma(d)}$.
Now note that for every $1\le i\le d$ and $i\le j\le d$ we have $x_{\sigma(j)} \ge y_{\sigma(j)} \ge y_{\sigma(i)} = y_{(i)}$.
This means that $x$ has at least $d-i+1$ elements that are at least as large as $y_{(i)}$, which means that we must have $x_{(i)}\ge y_{(i)}$.

Now, suppose without loss of generality that $p_1\le\cdots\le p_d$.
Let $\pi:[d]\to[d]$ be an arbitrary permutation and let $S_i$ be the subset of vertices of the $\ith$ vertex class containing those vertices labeled by $p_{\pi(i)},p_{\pi(i)}+1,\ldots,n$.
Then for every edge $e=(x_1,\ldots,x_d)$ spanned by the vertices in $\bigcup_{i=1}^d S_i$ it holds that $x_i\ge p_{\pi(i)}$.
It now follows from our observation above that $x_{(i)}\ge p_i$.
By the definitions of $q_n(p_1,\ldots,p_d)$ and $G_0$ we conclude that $e\in\overline{G_0}$.
Hence, $\bigcup_{i=1}^d S_i$ spans a copy of $K^d_{n-p_1+1,\ldots,n-p_d+1}$ in $\overline{G_0}$ having $n-p_{\pi(i)}+1$ vertices in the $\ith$ vertex class, as desired.
\end{proof}

\begin{lemma}\label{lemma:main_lower}
$\WUH \geq \norm{G_0}$.
\end{lemma}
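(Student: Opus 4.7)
Following the graph-case argument sketched in the excerpt, the plan is to reduce the undirected-weak-saturation bound to the symmetric one \eqref{eq:symmetricH} by embedding $H$ into a larger hypergraph $H^*$. I would enlarge each vertex class $V_i$ by adding $p_d - p_1$ new vertices (so the enlarged classes have common size $N := n + p_d - p_1$), and then define $H^*$ as the union of $H$ with a carefully chosen set $E^+$ of edges meeting the new vertices. If we can arrange that $H^*$ is weakly $K^d_{p_d,\ldots,p_d}$-saturated, then \eqref{eq:symmetricH} applied to the enlarged classes gives $\|H^*\| \ge N^d - (N - p_d + 1)^d = N^d - (n - p_1 + 1)^d$, and hence $\|H\| \ge N^d - (n - p_1 + 1)^d - |E^+|$.

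To show that $H^*$ is weakly $K^d_{p_d,\ldots,p_d}$-saturated, I would exhibit its saturation process in two phases. In Phase~1 we replay $H$'s own saturation process on the missing edges of $H$: each added edge creates a $K^d_{p_1,\ldots,p_d}$ in $H$ in some orientation $\sigma$, which we extend to a $K^d_{p_d,\ldots,p_d}$ in $H^*$ by adjoining $p_d - p_{\sigma(i)}$ new vertices to class $i$. The crucial point is that $\sigma$ is a permutation, so there is always some class $i_0$ with $\sigma(i_0) = d$ that requires no new vertices, which means the extended copy never demands an edge lying entirely among new vertices---such edges need not be present in $H^*$. In Phase~2 we handle the remaining missing edges of $H^*$ (those involving multiple new vertices) by using the now-complete all-original $K^d_{n,\ldots,n}$ to supply enough ``original'' padding for the required $K^d_{p_d,\ldots,p_d}$, in the spirit of the monotonicity observation used in the proof of Claim~\ref{claim:complement}.

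The main obstacle is choosing $E^+$ of exactly the right size, namely $N^d - (n - p_1 + 1)^d - \|G_0\|$. In the graph case ($d = 2$), taking $E^+$ to be all edges with exactly one new vertex yields precisely $|E^+| = 2n(p_d - p_1)$, which matches the target and makes the argument clean; but for $d \ge 3$ this naive choice overshoots, giving a trivial bound on $\|H\|$. I expect the correct $E^+$ to be dictated by the extremal structure of $G_0$, and in particular by the every-orientation property of $\overline{G_0}$ furnished by Claim~\ref{claim:complement}: certain edges among the new vertices must be excluded from $E^+$ in a pattern mirroring the anti-clique structures in $\overline{G_0}$, so that the total count of dummies drops to exactly $\|G_0\|$ short of $N^d - (n-p_1+1)^d$. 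Verifying that this refined $E^+$ still supports the two-phase saturation process uniformly in the input $H$ is the technical heart of the argument, and it is precisely the trade-off---``enlarge the parameters to $p_d$ and then compensate via a carefully chosen set of dummies''---that the overview describes as being the shape of the reduction.
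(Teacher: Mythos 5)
Your proposal has the right general shape---embed $H$ into a larger hypergraph, run a two-phase saturation process (replay $H$'s process and extend each created copy via the every-orientation property of Claim~\ref{claim:complement}, then clean up), and invoke the symmetric formula~(\ref{eq:symmetricH})---but the construction that makes the count come out right is exactly the piece you leave open, and with your choice of parameters it cannot be supplied. You add only $p_d-p_1$ new vertices per class and target $K^d_{p_d,\ldots,p_d}$. For the final bound to equal $\norm{G_0}$, the total deficit of $H^*$ relative to the complete $N^d$-edge hypergraph must exceed the deficit of $H$ by exactly $\norm{G_0}$. Since $\norm{G_0}=(d(p_1-1)+o(1))n^{d-1}$ while the block of edges lying entirely among new vertices has size only $(p_d-p_1)^d$, for $d\ge 3$ the missing $\norm{G_0}$ edges cannot live inside the new block: you are forced to delete roughly $\Theta(n^{d-1})$ \emph{mixed} edges from $E^+$. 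But Phase~1 needs mixed edges at arbitrary positions, because the copy $C$ created by the input saturation process sits at arbitrary original coordinates, so the padding edges it requires range over essentially all mixed positions; any fixed pattern of $\Theta(n^{d-1})$ deleted mixed edges will be demanded by some legitimate $H$ and some saturation process. (Making the deleted edges themselves part of the saturation process does not obviously help either, since their absence already obstructs the Phase~1 extensions that must precede them.) So the ``refined $E^+$'' you hope for does not exist within this framework, and the argument as proposed only yields the weaker bound $n^d+(p_d-p_1)^d-(n-p_1+1)^d$, which is strictly below $\norm{G_0}$ in general (e.g.\ already for $d=3$, $(p_1,p_2,p_3)=(p,q,q)$ with $p<q$).

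The fix, which is what the paper does, is to add a full $n$ new vertices to each class (classes of size $2n$), target $K^d_{n+1,\ldots,n+1}$, keep \emph{all} mixed edges, and install $\overline{G_0}$ itself on the new block $U$. Then the non-edge count splits cleanly, $(2n)^d-\norm{H'}=(n^d-\norm{H})+\norm{G_0}$, with the entire deficit of $\norm{G_0}$ sitting inside $U$; Phase~1 goes through because all mixed edges are present and Claim~\ref{claim:complement} supplies, for the orientation $\pi$ of each created copy $C$, a complementary copy of $K^d_{n-p_1+1,\ldots,n-p_d+1}$ in $\overline{G_0}$; and Phase~2 adds the edges of $G_0$ inside $U$, each of which is then the unique missing edge of the $K^d_{n+1,\ldots,n+1}$ spanned by $\bigcup_i(V_i\cup\{x_i\})$. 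Comparing with $\WUsym{2n}(n+1,\ldots,n+1)=(2n)^d-n^d$ gives $\norm{H}\ge\norm{G_0}$. Note that this generalizes the $d=2$ warm-up differently from how you read it: the graph argument adds $q-p$ vertices because that happens to work there, but the robust generalization enlarges by $n$, not by $p_d-p_1$.
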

\begin{proof}
Let $H$ be a $d$-uniform $d$-partite hypergraph that is weakly $K^d_{p_1,\ldots,p_d}$-saturated, where its vertex classes $(V_1,\ldots,V_d)$ are each of cardinality $n$.
We construct a hypergraph $H'$ with $2n$ vertices in each vertex class by combining it with $\overline{G_0}$
as follows.
Let $U_1,\ldots,U_d$ be $d$ sets of new vertices (i.e., disjoint from $\bigcup_{i=1}^d V_i$ and from each other) with $\size{U_i}=n$.
We let $H'$ be the $d$-uniform $d$-partite hypergraph with vertex classes $(V_1\cup U_1,\ldots,V_d\cup U_d)$ whose edges are defined as follows.
The edges of $H'$ that are spanned by the vertices in $V:=\bigcup_{i=1}^d V_i$ are precisely those of $H$; the edges of $H'$ that are spanned by the
vertices in $U:=\bigcup_{i=1}^d U_i$ are precisely those of $\overline{G_0}$; finally, all other possible edges (i.e., those containing at least one vertex from $V$ and at least one vertex from $U$) appear in $H'$ as well.
Notice that by counting the non-edges of $H'$ we get
\begin{align}\label{eq:H'}
(2n)^d-\norm{H'}=(n^d-\norm{H})+\norm{G_0} \;.
\end{align}
We claim that $H'$ is weakly $K^d_{n+1,\ldots,n+1}$-saturated.
Observe that~(\ref{eq:symmetricH}) and~(\ref{eq:H'}) would then give
$$
(2n)^d-(n^d-\norm{H})-\norm{G_0}=\norm{H'}\geq \WUsym{2n}(n+1,\ldots,n+1)=(2n)^d-n^d\;,
$$
implying that $\norm{H} \ge \norm{G_0}$, thus completing the proof.

To show that $H'$ is weakly $K^d_{n+1,\ldots,n+1}$-saturated, we claim that one obtains a saturation process of $H'$ with respect to $K^d_{n+1,\ldots,n+1}$ by first adding the non-edges of $H$ in the same order they appear in some saturation process of $H$ (with respect to $K^d_{p_1,\ldots,p_d}$), and then adding, in an arbitrary order, all edges of $G_0$.
To see that this indeed defines a saturation process of $H'$ with respect to $K^d_{n+1,\ldots,n+1}$, let $e$ be a non-edge of $H$ added at some point.
Then adding $e$ to $H'$ (after all the edges that precede $e$ in the saturation process are added) creates a new copy of $K^d_{p_1,\ldots,p_d}$ in $H'$, which we denote $C$.
Let $\pi:[d]\to[d]$ be a permutation such that $C$ contains $p_{\pi(i)}$ vertices in the $\ith$ vertex class for every $1\le i\le d$.
By Claim~\ref{claim:complement}, $\overline{G_0}$ contains a copy $C'$ of $K^d_{n-p_1+1,\ldots,n-p_d+1}$ having $n-p_{\pi(i)}+1$ vertices in the $\ith$ vertex class. It follows that when adding $e$ we in fact create a new copy of $K^d_{n+1,\ldots,n+1}$ in $H'$, namely, the copy spanned by the union of the vertex sets of $C$ and $C'$.
To complete the proof of our claim we observe that, after all the edges over $V$ are added to $H'$, each edge $(x_1,\ldots,x_d)$ of $G_0$ is the only missing edge in the copy of $K^d_{n+1,\ldots,n+1}$ spanned by $\bigcup_{i=1}^d \left(V_i\cup\{x_i\}\right)$ (recall that $\size{V_i}=n$ for every $i$).
This completes the proof of the statement.
\end{proof}

\begin{proof}[Proof of Theorem~\ref{thm:Main}]
Lemmas~\ref{lemma:main_upper} and~\ref{lemma:main_lower} give $\WUH = \norm{G_0} = n^d - q_n(p_1,\ldots,p_d)$.
\end{proof}

\subsection{Explicit formulas for $\WUH$}

We begin by computing $q_n(p_1,\ldots,p_d)$ in some easy special cases. When the $p_i$ take only one value we clearly have $q_n(v,\ldots,v)=(n-v+1)^d$.
When the $p_i$ take two values $v_1\le v_2$, where $v_1$ occurs $r$ times, it is easy to see that $x=(x_1,\ldots,x_d)\in[n]^d$ is enumerated by $q_n$ if and only if it holds that $x_i\ge v_1$ for all $i$ and the number of $x_i$ smaller than $v_2$ is at most $r$; thus,
$q_n(v_1,\ldots,v_1,v_2,\ldots,v_2)=\sum_{i=0}^{r}\binom{d}{i} (v_2-v_1)^{i}(n-v_2+1)^{d-i}$.

Let us consider the general case. Suppose $p_1,\ldots,p_d$ take $m+1$ distinct values $v_1<\cdots<v_{m+1}$, where $v_i$ occurs $r_i$ times, $1\le i\le m+1$ (so $r_{m+1}=d-r_1-\cdots-r_m$).
For $x=(x_1,\ldots,x_d)\in[n]^d$ set $i_j=\size{\{i : v_j \le x_i < v_{j+1}\}}$.
Then it is not hard to see that $x$ is enumerated by $q_n$ if and only if $x_i\ge v_1$ for all $i$ and moreover $i_1 \le r_1 ,\, i_1+i_2 \le r_1+r_2 ,\,\ldots,\,i_1+\cdots+i_m\le r_1+\cdots+r_m$.
This gives the following explicit formula;
$$q_n(p_1,\ldots,p_d) = \sum_{i_1,\ldots,i_m}\binom{d}{i_1,\ldots,i_m}\prod_{j=1}^{m} (v_{j+1}-v_{j})^{i_j} \cdot (n-v_{m+1}+1)^{d-\sum_{k=1}^m i_k}$$
where the sum is over all $i_1,\ldots,i_m$ satisfying, for every $1\le j\le m$, the inequality $i_1+\cdots+i_j \leq r_1+\cdots+r_j$.\footnote{The notation $\binom{n}{k_1,\ldots,k_m}$ stands for the multinomial coefficient, that is, $n!/(k_1!\cdots k_m!\ell!)$ where $\ell=n-\sum_{j=1}^m k_i$.}
We note that when all the $p_i$ are distinct, that is, when $r_1=\cdots=r_d=1$, the number of summands in the above formula is the $\dth$ Catalan number. So in a sense, $q_n(p_1,\ldots,p_d)$ may be thought of as a ``weighted'' Catalan number.

An alternative description for $\WUH$ can be obtained as follows.
By Definition~\ref{def:q_n} and Theorem~\ref{thm:Main} we have that $\WUH$ equals the number of $d$-tuples $x\in[n]^d$ such that $x_{(i)} < p_i$ holds for at least one $1\le i\le d$.
Consider now the set of $d$-tuples
$$L_i(t)=\Big\{(x_1,\ldots,x_d)\in[n]^d \,:\, \size{\{j:x_j < t\}} = i\Big\}.$$
A moment's thought reveals that\footnote{Indeed, if $i$ is the largest such that $x_{(i)}<p_i$ then clearly $x\in L_i(p_i)$; conversely, if $x\in L_i(p_i)$ then $x_{(i)}<p_i$.}
\begin{equation}\label{eq:incexc0}
\WUH=\size{\bigcup_{i=1}^d L_i(p_i)}.
\end{equation}
We therefore obtain the inclusion-exclusion formula
\begin{equation}\label{eq:incexc}
\WUH = \sum_{\emptyset\neq I\subseteq [d]}
(-1)^{\size{I}+1}\size{\bigcap_{i\in I} L_i(p_i)}\;.
\end{equation}
It is easy to see that
\begin{equation}\label{eq:incexc1}
\size{L_i(p_i)}=\binom{d}{i}(p_i-1)^i(n-p_i+1)^{d-i},
\end{equation}
and that for arbitrary $I=\{i_1<i_2<\cdots<i_t\}$ we have
\begin{equation}\label{eq:incexc2}
\size{\bigcap_{i\in I} L_i(p_i)} =
\binom{d}{i_1,\ldots,i_t}(p_{i_1}-1)^{i_1} \cdot \prod_{j=2}^t (p_{i_j}-p_{i_{j-1}})^{i_j-i_{j-1}} \cdot (n-p_{i_t}+1)^{d-\sum_{k=1}^t i_k} \;.
\end{equation}
Plugging (\ref{eq:incexc2}) into (\ref{eq:incexc}) we get another explicit formula for $\WUH$.

Note that (\ref{eq:incexc0}) gives the crude bound $\size{L_1(p_1)} \le \WUH \le \sum_{i=1}^d \size{L_i(p_i)}$.
Combining it with (\ref{eq:incexc1}) we get that
$$
d(p_1-1)(n-p_1+1)^{d-1} \le \WUH \le \sum_{i=1}^d \binom{d}{i}(p_i-1)^i(n-p_i+1)^{d-i}\;,
$$
which implies the asymptotic formula~(\ref{eq:W_asymptotic}) mentioned in the Introduction.

\section{Undirected Two Families Theorem}\label{sec:thm_combin}

In this section we prove Theorem~\ref{thm:Combin}, give an explicit formula for $Q(a_1,\ldots,a_d,b_1,\ldots,b_d)$, as well as give an alternative proof of Theorem~\ref{thm:Main}.
Our proof of Theorem~\ref{thm:Combin} will follow by a reduction to Alon's Two Families Theorem which we repeat here.

\begin{theo}[\textbf{Alon~\cite{alon}}]\label{thm:Noga}
Let $X_1,\ldots,X_d$ be $d$ disjoint sets and let $a_1,\ldots,a_d,b_1,\ldots,b_d$ be nonnegative\footnote{The original statement in~\cite{alon} considers positive integers, but it is easy to see that it implies the statement with nonnegative integers.} integers.
Suppose $A_1,\ldots,A_h$ and $B_1,\ldots,B_h$ are two families of subsets of $X_1\cup\cdots\cup X_d$ satisfying:
\begin{enumerate}
\item $A_i\cap B_i=\emptyset$ for every $1 \leq i \leq h$.
\item $A_i\cap B_j\neq\emptyset$ for every $1\leq  i<j \leq h$.
\item For every $1\le i\leq h$ and $1\le j\le d$ we have $\size{A_i\cap X_j}\leq a_j$ and $\size{B_i\cap X_j}\leq b_j$.

\end{enumerate}
Then $h\le \prod_{j=1}^d \binom{a_j+b_j}{b_j}$.
\end{theo}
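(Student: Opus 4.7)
The plan is to prove Alon's theorem by the multilinear-algebra method, adapting the exterior-algebra proof of the skew Bollob\'as theorem (due to Lov\'asz, Frankl, and Kalai for $d=1$) to the multipartite setting via a tensor-product construction. Specifically, I would encode each index $i \in [h]$ by a vector $\omega_A(A_i)$ in the finite-dimensional space
$$
V := \bigotimes_{j=1}^d \Lambda^{a_j}(\mathbb{R}^{a_j+b_j}),
$$
which has dimension exactly $\prod_{j=1}^d \binom{a_j+b_j}{a_j}$, and then show that these $h$ vectors are linearly independent, using the $\omega_B(B_i)$ as a dual system of witnesses.

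First I would reduce to the case $\size{A_i \cap X_j} = a_j$ and $\size{B_i \cap X_j} = b_j$ for all $i,j$ by padding: for every pair $(i,j)$, add $a_j - \size{A_i \cap X_j}$ fresh dummy elements (uniquely labelled by $(A,i,j)$) to both $X_j$ and $A_i$, and $b_j - \size{B_i \cap X_j}$ other fresh dummies (uniquely labelled by $(B,i,j)$) to both $X_j$ and $B_i$. Since the $A$-dummies for $A_i$ are disjoint from the $B$-dummies for $B_i$, condition~1 is preserved; and since we only enlarge sets, condition~2 is preserved as well. Next, for each $j$ fix vectors $v_x \in \mathbb{R}^{a_j+b_j}$ ($x \in X_j$) in general position (e.g., points on the moment curve), so that any $a_j+b_j$ of them are linearly independent. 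Set
$$
\omega_A(A_i) := \bigotimes_{j=1}^d \bigwedge_{x \in A_i \cap X_j} v_x \in V, \qquad \omega_B(B_i) := \bigotimes_{j=1}^d \bigwedge_{x \in B_i \cap X_j} v_x,
$$
with respect to a fixed ordering on each $X_j$, and define a bilinear pairing into $\bigotimes_j \Lambda^{a_j+b_j}(\mathbb{R}^{a_j+b_j}) \cong \mathbb{R}$ by wedging the two vectors factor by factor.

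The two key properties of this pairing are: (i) if $A_i \cap B_k \neq \emptyset$ then $\langle \omega_A(A_i),\omega_B(B_k)\rangle = 0$, because some slice $A_i \cap X_j$ and $B_k \cap X_j$ shares an element $x$, which makes $v_x$ appear twice in the $j$-th wedge; and (ii) if $A_i \cap B_i = \emptyset$ then $\langle \omega_A(A_i),\omega_B(B_i)\rangle \neq 0$, because in each slice $(A_i \cup B_i) \cap X_j$ consists of $a_j+b_j$ distinct elements whose vectors are linearly independent by general position, so the $j$-th factor is a nonzero top form. A standard triangular argument then finishes the proof: if $\sum_i c_i \omega_A(A_i) = 0$ and $k$ is the largest index with $c_k \neq 0$, pairing with $\omega_B(B_k)$ kills every term with $i < k$ (by condition~2 and (i)), kills every term with $i > k$ (since $c_i = 0$), and leaves a nonzero multiple of $c_k$ (by condition~1 and (ii)), a contradiction. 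Hence $\omega_A(A_1),\ldots,\omega_A(A_h)$ are linearly independent in $V$, so $h \leq \dim V = \prod_{j=1}^d \binom{a_j+b_j}{b_j}$.

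The main obstacle to executing this cleanly is property (ii): after padding, one must know that the $a_j + b_j$ vectors $\{v_x : x \in (A_i \cup B_i) \cap X_j\}$ are genuinely \emph{distinct} and therefore linearly independent in $\mathbb{R}^{a_j+b_j}$. This is exactly where the matching $\dim V_j = a_j+b_j$ and the preservation of $A_i \cap B_i = \emptyset$ under padding come in, so both the choice of ambient dimension and the disjoint dummy labelling across the $A$- and $B$-sides are essential; once these are in hand the rest of the argument is formal multilinear algebra.
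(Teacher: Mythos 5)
Your proposal is correct. Bear in mind that the paper itself does not prove Theorem~\ref{thm:Noga}: it is invoked as a black box from Alon's paper~\cite{alon}, and Alon's original proof is essentially the exterior-algebra argument you outline (tensor products of exterior powers of generic vectors, a factorwise wedge pairing, and the skew triangular independence argument), so you are reproducing the cited source's approach rather than anything proved in this paper. Your treatment of the one delicate point --- padding each slice up to sizes $a_j$ and $b_j$ with privately labelled dummies, which preserves $A_i\cap B_i=\emptyset$ and keeps the $a_j+b_j$ vectors of each slice $(A_i\cup B_i)\cap X_j$ distinct and linearly independent --- is sound, so the argument goes through.
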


Recall that the conditions of Theorem~\ref{thm:Combin} differ from those in  Alon's theorem only in that $\size{A_i\cap X_j}\leq a_j$ need not hold, and instead it is only required that for every $1\le i\le h$ there is a permutation $\pi:[d]\to[d]$ so that $\size{A_i\cap X_j}\leq a_{\pi(j)}$ for $1\le j\leq d$.

\subsection{Proof of Theorem~\ref{thm:Combin}}

We start with the proof of the upper bound.

\begin{proof}
Put $a=\max_i a_i$. Let $U_1,\ldots,U_d$ be $d$ mutually disjoint sets,
where $U:=\bigcup_{j=1}^d U_j$ is also disjoint from $X_1\cup\cdots\cup X_d$,
and where each $U_j$ contains $a+b_j$ elements labeled $1,2,\ldots,a+b_j$.
Setting $X'_i:=X_i\cup U_i$, the idea is to transform any two families over $X_1\cup\cdots\cup X_d$ satisfying the conditions of Theorem~\ref{thm:Combin}  into two larger families over $X'_1\cup\cdots\cup X'_d$ satisfying the conditions of Theorem~\ref{thm:Noga}.

For a permutation $\pi:[d]\to[d]$ we denote $C_{\pi}$ the subset of $U$ containing from each $U_j$ its last $a-a_{\pi(j)}$ members, that is, those labeled by $a_{\pi(j)}+b_j+1,\ldots,a+b_j$.
For each $1\le i\le h$ fix a permutation $\pi=\pi_i:[d]\to[d]$ satisfying $\size{A_i\cap X_j}\le a_{\pi(j)}$ (one exists by the fourth condition in the statement in Theorem~\ref{thm:Combin}) and set $A'_i=A_i\cup C_{\pi}$.
Note that $\size{A'_i\cap X'_j} \le a$ for every $1\le j\le d$.
It is thus clear that the two families $A'_1,\ldots,A'_h$ and $B_1,\ldots,B_h$ satisfy the first and second conditions in the statement of  Theorem~\ref{thm:Noga} with respect to $X'_1,\ldots,X'_d$, simply because every $C_{\pi}$ is disjoint from $X_1\cup\cdots\cup X_d$; furthermore, they also satisfy the third condition with ${a_1=\cdots=a_d=a}$ and the same $b_1,\ldots,b_d$.

Now, suppose that we are able to add $h'$ new sets to each family---with the sets in the first family (i.e., the $A_i$'s in the statement) containing at most $a$ elements from each part $X'_j$ and the sets in the second family (i.e., the $B_i$'s) containing at most $b_j$ elements from $X'_j$---while still satisfying the first and second conditions of Theorem~\ref{thm:Noga}.
Applying Theorem~\ref{thm:Noga} would then yield the upper bound $h+h'\le \prod_{j=1}^d \binom{a+b_j}{b_j}$.
Therefore, to complete the proof it suffices to show that we may extend the two families by $h'$ new sets where
$h' = \prod_{j=1}^d \binom{a+b_j}{b_j}-Q(a_1,\ldots,a_d,b_1,\ldots,b_d)$.

Note that $Q(a_1,\ldots,a_d,b_1,\ldots,b_d)$ equals the number of sets $B\subseteq U$, with $\size{B\cap U_j}=b_j$, for which there is a permutation $\pi:[d]\to[d]$ so that $B\cap C_{\pi}=\emptyset$.
Thus, $h'=\prod_{j=1}^d \binom{a+b_j}{b_j}-Q(a_1,\ldots,a_d,b_1,\ldots,b_d)$ is the number of sets $B\subseteq U$, with $\size{B\cap U_j}=b_j$, such that for every permutation $\pi$, $B\cap C_{\pi}\neq\emptyset$.
Let $B'_1,\ldots,B'_{h'}$ denote the sets enumerated by $h'$, and consider the two families\footnote{We write $\overline{S}$ for the complement of $S$ in $U$, that is, $U\setminus S$.}
$$A'_1,\ldots,A'_h,\overline{B'_1},\ldots,\overline{B'_{h'}}$$
and
$$B_1,\ldots,B_h,B'_1,\ldots,B'_{h'} \;.$$
Since any $B'_i$ contains $b_j$ elements from each $U_j$, we have that $\overline{B'_i}$ contains $a$ elements from each $U_j$.
Therefore, every set in the first family contains at most $a$ members from each part $X'_j$, and every set in the second family contains at most $b_j$ members from each part $X'_j$, as desired.
We claim that the above two families satisfy the conditions of   Theorem~\ref{thm:Noga}, which would complete the proof by applying that theorem as discussed above.
The first condition in the statement is clearly satisfied, as $A'_i\cap B_i=\emptyset$ and $\overline{B'_i}\cap B'_i=\emptyset$. As for the second condition, recall that, as observed above, $A'_i\cap B_j\neq\emptyset$ when $i\neq j$; moreover, it is clear that for any $i\neq j$ we have $\overline{B'_i}\cap B'_j\neq\emptyset$, as $B'_j\nsubseteq B'_i$.
It remains to show that for every $1\le i\le h$ and every $1\le j\le h'$ we have $A'_i\cap B'_j\neq\emptyset$.
Indeed, any $A'_i$ contains some $C_{\pi}$ and any $B'_j$ intersects every $C_{\pi}$. This completes the proof.
\end{proof}

We now show that the bound in Theorem~\ref{thm:Combin} is best possible for any choice of $a_1,\ldots,a_d$ and $b_1,\ldots,b_d$.

\begin{proof}
Given $a_1,\ldots,a_d$ and $b_1,\ldots,b_d$ let $h=Q(a_1,\ldots,a_d,b_1,\ldots,b_d)$. We need to construct two families of $h$ sets satisfying the four conditions of Theorem~\ref{thm:Combin}. For a set $B\subseteq U_1\cup\cdots\cup U_d$ (where $U_1,\ldots,U_d$ are as in the definition of $Q$), let $w(B)$ be the sum of the labels of its members, that is, $w(B)=\sum_{j=1}^d \sum_{x\in B\cap U_j} x$. Let $B_1,\ldots,B_h$ be the sets enumerated by $Q(a_1,\ldots,a_d,b_1,\ldots,b_d)$, ordered by decreasing weight (breaking ties arbitrarily). For each $B_i$, fix a permutation $\pi:[d]\to[d]$ so that $B_i\cap U_j$ is a subset of $[a_{\pi(j)}+b_j]$ of size $b_j$, and let $A_i=A_i(\pi)$ be the set satisfying for every $1\le j\le d$ that $A_i\cap U_j=[a_{\pi(j)}+b_j]\setminus (B\cap U_j)$.
The proof would follow by showing that $A_1,\ldots,A_h$ and $B_1,\ldots,B_h$ satisfy the conditions of Theorem~\ref{thm:Combin}.

It is clear from the definition of $A_i$ and $B_i$ (viewed as subsets of $U_1\cup\cdots\cup U_d$) that the third and fourth conditions in the statement are satisfied. As for the first condition, note that for every $1\le i\le h$ we have (by definition) $A_i\cap B_{i'}=\emptyset$. Hence, it remains to show that if $A_i\cap B_{i'}=\emptyset$ and $i\neq i'$ then $i'<i$. Fixing $i\neq {i'}$ for which $A_i\cap B_{i'}=\emptyset$, we will show that $w(B_{i'})>w(B_i)$, thus completing the proof.
Observe that for every $x\in B_{i'}\cap U_j$ either $x\in B_i\cap U_j$ or else $x > a_{\pi(j)}+b_j$ (i.e., $x\in U_j\setminus[a_{\pi(j)}+b_j]$), as otherwise $x \in A_i$. Since $\size{B_{i'}\cap U_j}=\size{B_i\cap U_j}$ ($=b_j$), it follows that $\sum_{x\in B_{i'}\cap U_j} x \ge \sum_{x\in B_i\cap U_j} x$, and moreover, this inequality is strict if $B_{i'}\cap U_j \neq B_i\cap U_j$. Since $B_{i'} \neq B_i$ there must be at least one $j$ for which $B_{i'}\cap U_j \neq B_i\cap U_j$, implying that $w(B_{i'}) > w(B_i)$, as desired.
\end{proof}

\subsection{Explicit formula for $Q(a_1,\ldots,a_d,b_1,\ldots,b_d)$}

For a permutation $\pi:[d]\to[d]$, let $\F_{\pi}$ be the family of sets $B\subseteq U_1\cup\cdots\cup U_d$ satisfying $B\cap U_{i}\subseteq [a_{\pi(i)}+b_i]$ and $\size{B\cap U_i}=b_{i}$.
Denoting $\S_d$ the set of permutations on $[d]$, we have the inclusion-exclusion formula
\begin{equation}\label{eq:Qformula1}
Q(a_1,\ldots,a_d,b_1,\ldots,b_d) = \size{\bigcup_{\pi\in \S_d} \F_{\pi}} = \sum_{\emptyset\neq I\subseteq \S_d}
(-1)^{\size{I}+1}\size{\bigcap_{\pi\in I} \F_{\pi}}\;.
\end{equation}
Notice that for every $\pi\in \S_d$ we have $\size{\F_{\pi}} = \prod_{i=1}^d \binom{a_{\pi(i)}+b_i}{b_i}$. More generally, for any $\emptyset\neq I\subseteq \S_d$, putting $a^I_i=\min_{\pi\in I} a_{\pi(i)}$ we clearly have
\begin{equation}\label{eq:Qformula2}
\size{\bigcap_{\pi\in I} \F_{\pi}} = \prod_{i=1}^d \binom{a^I_i+b_{i}}{b_i} \;.
\end{equation}
Plugging~(\ref{eq:Qformula2}) into~(\ref{eq:Qformula1}) gives an explicit formula for $Q(a_1,\ldots,a_d,b_1,\ldots,b_d)$.
As an example, we get for $d=2$ that if $a_1\le a_2$ then
$$Q(a_1,a_2,b_1,b_2) = \binom{a_1+b_1}{b_1}\binom{a_2+b_2}{b_2}+\binom{a_2+b_1}{b_1}\binom{a_1+b_2}{b_2}-\binom{a_1+b_1}{b_1}\binom{a_1+b_2}{b_2}\;.$$

\subsection{Alternative proof of Theorem~\ref{thm:Main}}\label{subsec:alternative}

At the beginning of Section~\ref{sec:Intro} we claimed that Theorem~\ref{thm:Combin} is the most general result of this paper. Let us briefly explain how can one derive the lower bound part of Theorem~\ref{thm:Main} from Theorem~\ref{thm:Combin}.
Let $G$ be a $d$-uniform $d$-partite hypergraph that is $K^d_{p_1,\ldots,p_d}$-saturated, and suppose $e_1,\ldots,e_h$ is a corresponding saturation process, that is, an ordering of the non-edges of $G$ such that, after all edges $e_{i'}$ with $i'<i$ are added to $G$, adding $e_i$  creates a new copy $C_i$ of $K^d_{p_1,\ldots,p_d}$.
Note that for each edge $e\in C_i$, either $e\in G$ or else $e=e_{i'}$ for some $i'\le i$.
Put $A_i=V(G)\setminus V(C_i)$ and observe that $A_1,\ldots,A_h$ and $e_1,\ldots,e_h$ satisfy the first and second conditions of Theorem~\ref{thm:Combin}; indeed, we have $e_i\subseteq V(C_i)$ so $A_i\cap e_i=\emptyset$, while for $i<j$ we have $e_j\nsubseteq V(C_i)$ so $A_i\cap e_j\neq\emptyset$.
Now, denote $V_1,\ldots,V_d$ the vertex classes of $G$, and suppose all are of size $n$.
Since $C_i$ is a copy of $K^d_{p_1,\ldots,p_d}$, there is a permutation $\pi:[d]\to[d]$ so that $\size{C_i\cap V_j}=p_{\pi(j)}$.
It follows that for every $1\le i\le h$ there is a permutation $\pi$ so that $\size{A_i\cap V_j}=n-p_{\pi(j)}$; moreover, clearly $\size{e_i\cap V_j}=1$.
Therefore, by applying Theorem~\ref{thm:Combin} we
deduce that the number of edges of $G$, which is $n^d-h$, is at least $n^d-Q(n-p_1,\ldots,n-p_d,1,\ldots,1)$.
This can be used to give an alternative proof of the lower bound part of Theorem~\ref{thm:Main} since it can be shown that $Q(n-p_1,\ldots,n-p_d,1,\ldots,1)=q_n(p_1,\ldots,p_d)$.

\section{Concluding Remarks and Open Problems}\label{sec:conclude}

\paragraph*{Undirected strong saturation in bipartite graphs:}
Recall that our main result (specialized to graphs) shows that in the setting of weak saturation, the undirected version $W_n(p,q)$ requires much fewer edges
than its directed analogue $\WL$. It is thus natural to ask what happens in the setting of strong saturation. As we now discuss, we conjecture
that while the undirected version is easier than the directed one, it is only easier by an additive constant factor.

Let $S_n(p,q)$ be the minimum number of edges in an $n\times n$ bipartite graph such that any addition of a new edge between its two classes creates a copy of $K_{p,q}$ (i.e., the graph is strongly $K_{p,q}$-saturated).  Let $\overrightarrow{S_n}(p,q)$ denote the \emph{directed} analogue\footnote{I.e., where the copies of $K_{p,q}$ must have their $p$ vertices in the first class and their $q$ vertices in the second class.} of $S_n(p,q)$.
Answering a conjecture of Erd{\H{o}}s-Hajnal-Moon~\cite{ErdosHaMo64}, $\overrightarrow{S_n}(p,q)$ was completely determined by Wessel~\cite{Wessel66} and Bollob\'as~\cite{Bollobas67} to be $(p+q-2)n-(p-1)(q-1)$.\footnote{In fact, this is a special case of Alon's Theorem~\ref{thm:Noga}; see~\cite{alon}.}
Perhaps surprisingly, there are constructions showing that $S_n(p,q)$ is, in general, strictly smaller than this (of course, there is no distinction between the undirected and directed versions when $p=q$). To see this, suppose $p \le q$ and let $G^k_{p,q}$ be any $n\times n$ bipartite graph having $p-1$ vertices in each class complete to the other class, some $k$ additional vertices in each class spanning a $K_{k,k}$, and where the remaining vertices have degree $q-1$.
Note that $G^k_{p,q}$ has the property that any new edge one adds to it has an endpoint of degree at least $q$. The $q$ neighbors, together with the $p$ complete vertices from the other class, then form a $K_{p,q}$, implying that $G^k_{p,q}$ is strongly $K_{p,q}$-saturated. One can check that $G^k_{p,q}$ has in fact $\overrightarrow{S_n}(p,q)-k(q-p-k)$ edges. Optimizing using $k=\floor{(q-p)/2}$ gives $S_{n}(p,q) \le \overrightarrow{S}_{n}(p,q) - \floor{(q-p)^2/4}$, for every $n$ large enough such that $G^k_{p,q}$ is well defined. 
We conjecture that this upper bound is best possible for large enough $n$, that is, the ``gain'' over the directed version is an additive constant.

\begin{conj}\label{conj:saturaion}
For every $p,q$ there is an integer $n_0$ such that for every $n\ge n_0$ we have
\begin{equation}\label{eqConj}
S_{n}(p,q) = \overrightarrow{S}_{n}(p,q) - \floor{\frac{(q-p)^2}{4}}\;.
\end{equation}
\end{conj}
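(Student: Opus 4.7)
The upper bound is given by the construction $G^k_{p,q}$ with $k=\lfloor(q-p)/2\rfloor$ described in the text, so the task is to prove the matching lower bound. The natural starting point, following Subsection~\ref{subsec:alternative}, is to take a strongly $K_{p,q}$-saturated bipartite graph $G$ with non-edges $e_1,\ldots,e_h$ and corresponding $K_{p,q}$-copies $C_1,\ldots,C_h$, set $A_i:=V(G)\setminus V(C_i)$, and apply Theorem~\ref{thm:Combin} to the families $(A_i)$ and $(e_i)$. The cross-intersection hypothesis is what distinguishes the strong setting: each $C_i$ sits inside $G\cup\{e_i\}$ and so contains only one non-edge of $G$ (namely $e_i$), giving $e_j\not\subseteq V(C_i)$ for $j\neq i$. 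One obtains $h\le Q(n-p,n-q,1,1)$, which after a short computation rearranges to the preliminary bound $e(G)\ge 2(p-1)n + (q-p)^2 - (p-1)^2$.

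This is tight for $p=q$ but weaker than the conjectured bound by roughly $(q-p)(n-q+1) - \lfloor(q-p)^2/4\rfloor$ when $p<q$, because Theorem~\ref{thm:Combin} captures only the global non-containment structure of the $C_i$'s, not the local degree constraints that strong saturation imposes. To close the gap, I would exploit the fact that for any non-edge $xy$, the new $K_{p,q}$ forces $\min(\deg(x),\deg(y))\ge p-1$ and $\max(\deg(x),\deg(y))\ge q-1$. Partitioning $X=X_M\sqcup X_H$ with $X_H=\{x:\deg(x)\ge q-1\}$ and $X_M=\{x:p-1\le\deg(x)<q-1\}$ (and likewise for $Y$), a short argument establishes that $X_M$ and $Y_M$ must be completely joined, so $|X_M|,|Y_M|\le q-2$. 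Since the extremal graph $G^k_{p,q}$ realizes $|X_M|=|Y_M|=k=\lfloor(q-p)/2\rfloor$ together with $p-1$ complete vertices on each side, a natural plan is a stability-type dichotomy: either $|X_M|,|Y_M|$ are close to the extremal pattern, in which case one matches the edge count of $G^k_{p,q}$ by careful degree accounting; or they are far from it, in which case a refined Two Families argument on the high-degree sub-instance gives the required bound.

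The main obstacle is that $G[X_H\cup Y_H]$ is not itself strongly $K_{p,q}$-saturated: non-edges between $X_H$ and $Y_M$ may be saturated only via $K_{p,q}$-copies that use vertices from $X_M$, so the restriction to high-degree vertices need not satisfy the saturation hypothesis, and Alon's Theorem~\ref{thm:Noga} cannot be applied to it as a black box. One remedy would be to prove a structural lemma forcing $G$ to contain $p-1$ complete vertices on each side, reducing the question to a symmetric saturation problem on the remaining graph; another would be an augmentation trick in the spirit of Section~\ref{sec:thm_main}, attaching auxiliary vertices encoding the orientation freedom of the $C_i$'s so as to reduce to a strongly $K_{q,q}$-saturated instance on $n+q-p$ vertices per side---though the naive augmentation by $q-p$ universal vertices loses exactly the $(q-p)n$ quantity we seek, so more delicate control is needed. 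Making the precise arithmetic come out to the sharp constant $\lfloor(q-p)^2/4\rfloor$, attained at $|X_M|=|Y_M|=\lfloor(q-p)/2\rfloor$, is where I expect the principal difficulty to lie, and is presumably why this problem remains open.
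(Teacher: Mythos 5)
The statement you are asked about is a \emph{conjecture}, and the paper does not prove it: the authors explicitly state that the only confirmed case is $p=1$, and that for $2\le p<q$ no nontrivial lower bound on $S_n(p,q)$ is known. Your proposal correctly recognizes this. Your upper bound is exactly the paper's (the construction $G^k_{p,q}$ with $k=\lfloor(q-p)/2\rfloor$), and your preliminary lower bound is computed correctly: applying Theorem~\ref{thm:Combin} to the pairs $(A_i,e_i)$ gives $h\le Q(n-p,n-q,1,1)=2(n-q+1)(n-p+1)-(n-q+1)^2$, hence $e(G)\ge n^2-(n-p+1)^2+(q-p)^2$. But note that this is precisely $W_n(p,q)$; the argument only uses the skew intersection condition and therefore cannot see anything beyond weak saturation. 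This is exactly the ``trivial'' bound the paper alludes to, and the authors point out in the concluding remarks that what is really needed is a \emph{non-skew} variant of Theorem~\ref{thm:Combin} (whose truth for $A_i\cap B_j=\emptyset$ iff $i=j$ would be strictly stronger than Theorem~\ref{thm:Combin}, since the construction $G^k_{p,q}$ shows the skew bound is not tight in the non-skew setting). Your diagnosis of why the gap is of order $(q-p)n$ is accurate.

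Your degree-based observations in the second paragraph are correct and, for $p=1$, essentially reproduce the paper's verification of the conjecture in that case: the medium-degree vertices $X_M,Y_M$ must be completely joined, forcing $|X_M|,|Y_M|\le q-2$, and a degree count then yields the bound. For general $p\ge 2$, however, the proposed ``stability-type dichotomy'' is where the proof stops being a proof and becomes a research plan: you do not supply the structural lemma forcing $p-1$ complete vertices on each side, nor a way to apply a Two Families argument to the high-degree sub-instance (which, as you note yourself, need not inherit the saturation property), nor the arithmetic that would produce the sharp constant $\lfloor(q-p)^2/4\rfloor$. So there is a genuine gap, but it is the gap the paper itself leaves open; your write-up is an honest and essentially accurate account of the state of the problem rather than a resolution of it.
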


The only case for which we can confirm the above conjecture is when $p=1$ and $q$ is arbitrary. 
Consider any $n\times n$ bipartite graph that is strongly $K_{1,q}$-saturated, and observe that the set of vertices of degree strictly smaller than $q-1$ must span a clique. Consider the class containing the fewest such low-degree vertices, and let $k$ be their number there. Summing the degrees of all vertices in that class, we get that the number of edges in the graph is at least
$$k^2+(n-k)(q-1)=(q-1)n-k(q-1-k) \ge (q-1)n-\floor{\frac{(q-1)^2}{4}} \;,$$
which agrees with (\ref{eqConj}).

We note that for general $2 \leq p < q$ we do not have any (non trivial) lower bound for $S_{n}(p,q)$. It will thus be interesting to prove even a weaker
version of the above conjecture, by establishing that 
$S_{n}(p,q) \geq \overrightarrow{S}_{n}(p,q) - C$ for some constant $C=C(p,q)$ that depends on $p$ and $q$ (and is independent of $n$).

\paragraph*{Variants of Theorem~\ref{thm:Combin}:}
It would be interesting to know what is the best possible bound one gets in  Theorem~\ref{thm:Combin} if, for example, one replaces the first and second conditions with a non-skew one, that is, $A_i\cap B_j=\emptyset$ if and only if $i=j$, as in Bollob\'as's Two Families Theorem. Such a variant would have implications for strong saturation; indeed, a special case of this was described in the previous item.
Note that the bound in Alon's Two Families Theorem cannot be improved if one replaces the skew condition with a non-skew one (as the natural extremal construction satisfies the non-skew condition as well).
Interestingly, it can be shown that the bound in Theorem~\ref{thm:Combin} is generally not best possible if one requires the non-skew condition instead (see the construction in the previous item).
Nevertheless, it seems reasonable to conjecture that the correct bound in the non-skew case with, e.g., $b_i=b$ constant should not in general be much larger than the bound $\prod_{i=1}^d \binom{a_i+b}{b}$ implied by Alon's theorem.

It would also be interesting to consider variants of the third and fourth conditions of Theorem~\ref{thm:Combin} that are symmetric with respect to $a_i$ and $b_i$. For example, one might instead require that there be a permutation $\pi:[d]\to[d]$ so that both $\size{A_i\cap X_j}\leq a_{\pi(j)}$ and $\size{B_i\cap X_j}\leq b_{\pi(j)}$. As another example, we may instead require the existence of two permutations $\pi,\sigma:[d]\to[d]$ such that $\size{A_i\cap X_j}\leq a_{\pi(j)}$ and $\size{B_i\cap X_j}\leq b_{\sigma(j)}$.
Notice that these two variants would have implications for weak saturation of classes of hypergraphs more general than $d$-uniform $d$-partite hypergraphs (this can be easily seen by following the reduction at the end of Section~\ref{sec:thm_combin}).

\paragraph{$H$-free saturation:}
Notice that when defining whether a hypergraph $G$ is strongly $H$-saturated, one may or may not require that $G$ be $H$-free.
Indeed, some authors make this requirement (e.g.,~\cite{Tuza84}) and some do not (e.g.,~\cite{alon}).
It would be interesting to know in this regard whether there is some $H$ for which requiring $H$-freeness changes the corresponding strong saturation number.

\end{document}